\newtheorem{theorem}{Theorem}[section]
\newtheorem{lemma}[theorem]{Lemma}
\newtheorem{proposition}[theorem]{Proposition}
\newtheorem*{question*}{Question} 
\theoremstyle{definition}
\newtheorem{remark}[theorem]{Remark}
\theoremstyle{definition}
\newcommand{\CP}{\mathbb{C}P}
\let\int\relax
\newcommand{\int}{\mathring}
\DeclareMathOperator{\id}{{id}}
\newcommand{\sxs}{S^2 \! \times \! S^2}
\newcommand{\smallsum}{\raisebox{1pt}{\text{\footnotesize$\#$}}}
\patchcmd{\@maketitle}{\LARGE \@title}{\fontsize{16}{19.2}\selectfont\@title}{}{}
\author[K.\ Hayden]{Kyle Hayden}
\address{Department of Mathematics and Computer Science, Rutgers University, Newark, NJ 07102, USA}
\email{kyle.hayden@rutgers.edu}
\author[S.\ Kang]{Sungkyung Kang}
\address{Center for Geometry and Physics, Institute for Basic Science (IBS), Pohang 37673, Korea}
\email{sungkyung38@icloud.com}
\author[A.\ Mukherjee]{Anubhav Mukherjee}
\address{Deapartment of Mathematics, Princeton University, Princeton, NJ 08540, USA}
\email{anubhavmaths@princeton.edu}
\title[One is not enough for closed surfaces]{One stabilization is not enough \\ for closed knotted surfaces}
\begin{document}

\ \vspace{-.05in}

\maketitle

\thispagestyle{empty}


\vspace{-.125in}

\begin{center}\small
\textsc{Kyle Hayden, Sungkyung Kang, Anubhav Mukherjee}
\end{center}

\vspace{-.075in}

\begin{abstract}
In this brief note, we show that there exist smooth 4-manifolds (with nonempty boundary) containing pairs of exotically knotted 2-spheres that remain exotic after one (either external or internal) stabilization. It follows that the ``one is enough'' theorem of Auckly-Kim-Melvin-Ruberman-Schwartz does not hold for closed surfaces whose homology classes are characteristic.
\end{abstract}

\bigskip

\section{Introduction}\label{sec:intro}

Exotic phenomena in dimension four is fundamentally unstable. For example,  results of Wall \cite{wall:4-manifolds} and Gompf \cite{gompf:stable} show that exotic compact 4-manifolds become diffeomorphic after \emph{stabilizing} by taking repeated  connected sums with $\sxs$ (in the orientable case) or $S^2 \tilde \times S^2$ (in the non-orientable case). Analogous results hold for pairs of surfaces $\Sigma$ and $\Sigma'$ in a 4-manifold $X$ that are \emph{exotically knotted}, i.e., isotopic through ambient homeomorphisms of $X$ but not ambient diffeomorphisms of $X$. In particular, results of Perron \cite{perron2} and Quinn \cite{quinn:isotopy} imply that such surfaces become smoothly isotopic after sufficiently many \emph{external stabilizations}, where the ambient 4-manifold $X$ itself is stabilized away from $\Sigma$ and $\Sigma'$. More recently, Baykur-Sunukjian  \cite{baykur-sunukjian:stab} showed that exotic surfaces also become smoothly isotopic after repeated \emph{internal stabilization}, i.e., repeatedly summing each surface with a standard $T^2 \subset S^4$.

It remains a fundamental problem to understand the number of stabilizations required to dissolve exotic phenomena, and it is unknown whether or not a single stabilization suffices in the original setting of closed, simply-connected exotic 4-manifolds. One stabilization is enough for most known examples of exotic phenomena (c.f., \cite{auckly:stab,akbulut:knot-surgery,baykur-sunukjian:round,akmr:stable,akbulut:isotopy,kim:stab}), as well as in multiple more general settings \cite{akmrs:one-is-enough,auckly-sadykov,ruberman-strle}. However, a spate of recent results has  revealed contexts in which a single stabilization is not enough  \cite{lin:twist,lin-mukherjee,guth,kang,guth-hayden-kang-park,konno-mukherjee-taniguchi,hayden:atomic}. 

To date, all examples of exotic 4-manifolds or exotic surfaces that remain exotic after one stabilization have had nonempty boundary. 
With an eye towards the closed case, this note considers the intermediate setting of closed surfaces in 4-manifolds with boundary. 

\begin{theorem}\label{main theorem}
There exist smooth 4-manifolds with boundary  containing pairs of exotically knotted 2-spheres that remain exotic after one (external or internal) stabilization.
\end{theorem}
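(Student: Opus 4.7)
The plan is to derive the theorem from known examples of exotic slice disks in a 4-manifold with boundary that already remain smoothly non-isotopic after one stabilization, by capping the disks off with a standard disk in such a way that the ambient 4-manifold still has nonempty boundary. Concretely, recent work such as \cite{hayden:atomic, guth-hayden-kang-park, kang} provides a compact 4-manifold $X_0$ with nonempty boundary, a knot $K \subset \partial X_0$, and a pair of smoothly embedded disks $D, D' \subset X_0$ with $\partial D = \partial D' = K$ that are topologically but not smoothly isotopic rel boundary, with the smooth non-isotopy surviving one external or internal stabilization.

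First I would construct a ``capping cobordism'' $(W,\Delta)$: a 4-manifold $W$ with $\partial W = \partial X_0 \sqcup M$ for some nonempty $M$, together with a smoothly embedded disk $\Delta \subset W$ with $\partial \Delta = K$. A convenient choice is $W = (\partial X_0 \times [0,1]) \cup h$, where $h$ is a 2-handle attached along $K \subset \partial X_0 \times \{1\}$ with an appropriate framing and $\Delta$ is the union of $K \times [0,1]$ with the core of $h$. Then $X := X_0 \cup_{\partial X_0} W$ has boundary equal to the surgered $\partial X_0$, which is nonempty, and the 2-spheres $S := D \cup \Delta$ and $S' := D' \cup \Delta$ lie in $X$. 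The topological side is routine: any topological ambient isotopy of $X_0$ carrying $D$ to $D'$ rel $\partial X_0$ extends by the identity on $W$ to a topological ambient isotopy of $X$ carrying $S$ to $S'$.

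The main step is showing that $S$ and $S'$ remain smoothly non-isotopic in $X$ after one stabilization (either $X \mapsto X \,\smallsum\, \sxs$ or $S \mapsto S \,\smallsum\, T^2$). The natural approach is a puncture/reduction argument: if a smooth ambient isotopy $F$ of (a stabilization of) $X$ carried $S$ to $S'$, one would try to modify $F$ so that it preserves the cap $\Delta$ setwise, whence restricting to the complement of a neighborhood of $\Delta$ would give a smooth isotopy of $D$ to $D'$ rel boundary in the corresponding stabilization of $X_0$, contradicting the input. A more robust alternative is to exhibit a Floer- or Khovanov-theoretic invariant of the pair $(X,S)$ whose value is computed from a relative invariant of $(X_0, D)$ (with $\Delta$ contributing trivially), so that the detection mechanism for the exotic disks directly forces $S$ and $S'$ to be smoothly distinct.

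I expect this last transfer to be the crux of the proof. The cap $\Delta$ need not be preserved by an arbitrary smooth isotopy, so a purely geometric reduction requires a uniqueness-up-to-isotopy statement for disks in $W$ bounded by $K$, which is delicate when $W$ is stabilized. The cleaner route is the invariant one, but there one must confirm that the obstructions used in \cite{hayden:atomic, guth-hayden-kang-park, kang} extend to closed surfaces and survive both types of stabilization. This is also where the restriction highlighted in the abstract---that the resulting homology class $[S] \in H_2(X;\Z/2)$ is characteristic---naturally appears, since the one-stabilization-resistant obstructions in that literature typically carry a spin-type constraint, forcing the closed surface produced by capping to represent a characteristic class.
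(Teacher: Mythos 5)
Your construction gets the easy half right (capping exotic disks to obtain homologous, topologically isotopic spheres in a 4-manifold with boundary), but the crux --- showing the capped spheres $S$ and $S'$ stay smoothly non-isotopic after one stabilization --- is exactly the step you leave open, and neither of your two suggested routes closes it. The puncture/reduction idea fails at the point you yourself flag: a smooth ambient isotopy of the stabilized $X$ carrying $S$ to $S'$ has no reason to preserve the cap $\Delta$, and there is no uniqueness-up-to-isotopy statement for disks bounded by $K$ in the stabilized cobordism that would let you reposition it; indeed, after stabilization the general expectation (Perron--Quinn, Gabai, AKMRS) is that such rigidity disappears. The invariant-transfer route is also not available off the shelf: the obstructions that make the disks in \cite{hayden:atomic, guth-hayden-kang-park, kang} survive one stabilization are relative invariants (Khovanov-theoretic or Floer-theoretic, anchored to the boundary knot), and it is precisely not known that they induce invariants of the closed capped-off surfaces, let alone ones surviving both internal and external stabilization. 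So the proposal identifies the difficulty but does not resolve it; there is also a real possibility in your setup that $D$ and $D'$ simply become smoothly isotopic once the 2-handle region is attached, since nothing in your argument rules that out.

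The paper's detection mechanism is different and is what makes the argument close. It starts from Kang's exotic contractible pair $W$, $W'$ (built from $1$-surgery on slice disks $D, D' \subset B^4$ with common boundary) which remain nondiffeomorphic after one stabilization with $\sxs$. A key preliminary step, absent from your outline, is Lemma~\ref{lem:one-is-enough}/Proposition~\ref{diffeomorphism}: after one \emph{twisted} stabilization the two manifolds become diffeomorphic rel boundary, so there is a single ambient manifold $Z = W \# (S^2 \tilde\times S^2) \cong W' \# (S^2 \tilde\times S^2)$ containing both capped spheres $S$, $S'$ (this is also why the spheres end up characteristic). Exoticness is then detected not by a relative invariant of the disks but by surgery on the spheres: surgering $Z$ along $S$ or $S'$ returns $W$ or $W'$, so a smooth isotopy (even after one external stabilization of $Z$) would force $W \smallsum \sxs \cong W' \smallsum \sxs$, contradicting \cite{kang}; and internal stabilization is handled by Lemma~\ref{lem:internal}, which identifies the multiplicity-zero log transform along $S \smallsum T^2$ with surgery on $S$ followed by an $\sxs$-stabilization, reducing again to the same 4-manifold-level contradiction. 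That surgery-based bridge from sphere isotopies to the stabilization-resistant exotic pair of 4-manifolds is the idea your proposal is missing.
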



To the authors' knowledge, these are also the first examples of exotic pairs of surfaces that survive both internal and external stabilization. The key input for Theorem~\ref{main theorem} is the second author's recent construction of exotic contractible 4-manifolds that survive  one stabilization with $\sxs$ \cite{kang}. We prove these exotic 4-manifolds become diffeomorphic after a single \emph{twisted} stabilization, and this enlarged 4-manifold contains the desired pair of exotically knotted 2-spheres. Along the way, we use the results of \cite{akmrs:one-is-enough} to see that a single (possibly twisted) stabilization suffices to dissolve  a large class of corks --- namely, those obtained as surgeries along slice disks in $B^4$; see \S\ref{sec:background}.

\begin{remark} \textbf{(a)} 
In \cite{akmrs:one-is-enough}, Auckly--Kim--Melvin--Ruberman--Schwartz use  Gabai's light bulb theorem \cite{gabai:lightbulb} to establish a ``one is enough''-type result: If $X$ is a smooth, simply-connected 4-manifold  and $\Sigma,\Sigma' \subset X$ are smoothly embedded surfaces of the same genus and homology class such that
\begin{itemize}
    \item  [(i)] $\Sigma$ and $\Sigma'$ are each connected surfaces,
    \item [(ii)] $[\Sigma]= [\Sigma']$ is an \emph{ordinary} homology class, i.e., not dual to $w_2(X)$, and
    \item [(iii)] $X\setminus \Sigma$ and $X\setminus \Sigma'$ are simply-connected,
\end{itemize}
then $\Sigma$ and $\Sigma'$ become smoothly isotopic after one external stabilization.  In \cite{lin-mukherjee}, Lin and the third author showed that hypothesis (i) is necessary. In this note, our ambient 4-manifold $X$ is simply-connected and the surfaces are connected and have simply-connected complements, so our examples show that hypothesis (ii) is necessary, i.e., the surfaces must be ordinary. 
\textbf{(b)} In forthcoming work \cite{auckly:internal}, Auckly produces exotic surfaces in \emph{closed} 4-manifolds that remain distinct after internal stabilization.
\end{remark}

\subsection*{Acknowledgements} The authors thank Dave Auckly for helpful comments.  KH is supported by NSF grant DMS-2243128. SK is supported by the Institute for Basic Science (IBS-R003-D1).

\medskip

\section{Some topological constructions}\label{sec:background}

\subsection{Surgery along a slice disk}
We begin by recalling a  construction of contractible 4-manifolds based on slice disks (c.f., \cite{gordon:contractible}). Given a knot $K \subset S^3$ bounding a smooth slice disk $D\subset B^4$, fix a compact tubular neighborhood $N(D)\cong D \times D^2$ and a choice of meridian $\{p\} \times \partial D^2  \subset S^3$ for some point $p \in K =\partial D$. By taking  the disk exterior $B^4 \setminus \mathring{N}(D)$  and attaching a $(-k)$-framed 2-handle along the meridian $\mu$, we obtain a contractible 4-manifold whose boundary is $\smash{S^3_{\sfrac{1}{k}}(K)}$. We refer to this 4-manifold as $1/k$-surgery along the slice disk $D \subset B^4$, and we denote it by $B^4_{\sfrac{{1}}{k}}(D)$.



We can also describe this construction in terms of Kirby diagrams. Given a handle diagram of $B^4$ in which $K$ is unknotted and $D$ is its standard slice disk, we replace $K$ with a dotted 1-handle curve and add a $(-k)$-framed 2-handle along a meridian to $K$. An example is illustrated in Figure~\ref{fig:stevedore}, where $K$ is the stevedore knot.

\begin{figure}[b]
\center
\def\svgwidth{.9\linewidth} 
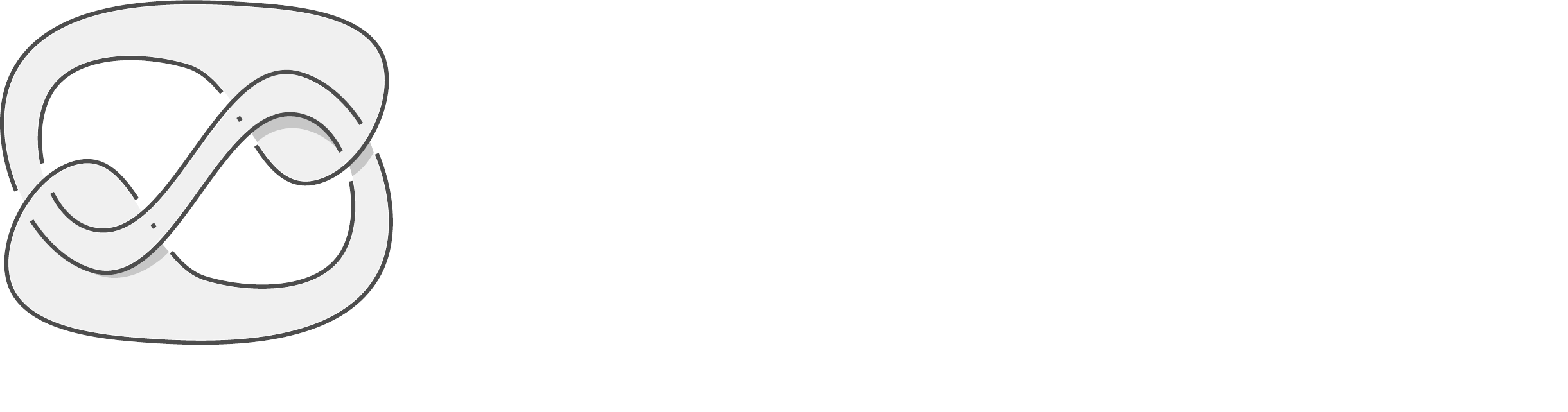
      \captionsetup{width=.84\linewidth}

\medskip

\caption{(a) A slice disk $D$ bounded by the stevedore knot. (b) A  handle diagram for $B^4$ in which $D$ is standard. (c) The 4-manifold $B^4_{\sfrac{{1}}{k}}(D)$.}
\label{fig:stevedore}

\vspace{-.35in}
\end{figure}

\newpage

\subsection{Log transformation} Consider a smooth 4-manifold $X$ containing a smoothly embedded torus $T$ with square zero, and fix a framed tubular neighborhood $N(T) \cong T^2\times D^2$. A \emph{torus surgery} along $T$ consists of removing $\mathring{N}(T)$ and regluing $N(T)$ using a non-trivial  self-diffeomorphism of its boundary $\partial N(T) \cong T^2 \times \partial D^2 = T^3$. Our arguments will involve a special case of this operation known as a \emph{logarithmic transform of multiplicity zero}, where the self-diffeomorphism of $T^3 \cong T^2 \times \partial D^2$ exchanges $\partial D^2$ with an essential simple closed curve in $T^2$. 
Note that the result is independent of the choice of curve in $T^2$, as any two such curves can be exchanged  by a diffeomorphism $f$ of $T^2$ and the gluing diffeomorphism $f \times \id$ of $T^2 \times \partial D^2$ extends to $T^2 \times D^2$.  For a handle-theoretic description of this operation,  see Figure~\ref{fig:log-transform} (c.f., \cite[Figure~8.25]{gompfstipsicz}).  
%

\begin{figure}
\center
\def\svgwidth{.85\linewidth} 
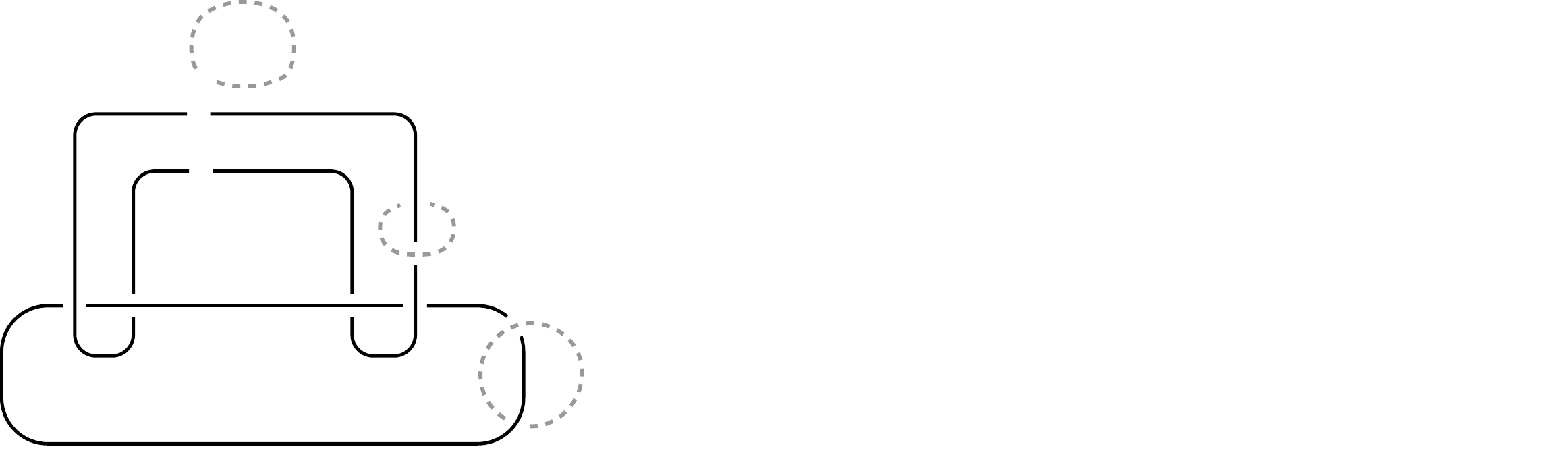
\caption{A log transform of multiplicity zero.}\label{fig:log-transform}

\end{figure}

\medskip

\section{Proof of the main theorem}

We begin by observing that a single (possibly twisted) stabilization suffices for any pair of exotic 4-manifolds obtained as surgeries along a pair of slice disks in $B^4$ with the same boundary.

\begin{lemma}\label{lem:one-is-enough} 
Let $D$ and $D'$ be slice disks in $B^4$ bounded by a knot $K \subset S^3$.

\vspace{1pt}

\begin{enumerate}
    \item [(a)] If $k$ is even, then $B^4_{\sfrac{1}{k}}(D) \smallsum (S^2 \! \times \! S^2) \cong B^4_{\sfrac{1}{k}}(D') \smallsum  (S^2 \! \times \! S^2)$ rel boundary.

\vspace{1pt}

    \item [(b)] If $k$ is odd, then $B^4_{\sfrac{1}{k}}(D) \smallsum (S^2  \tilde \times S^2) \cong B^4_{\sfrac{1}{k}}(D') \smallsum  (S^2  \tilde \times  S^2)$ rel boundary.  
\end{enumerate} 
\end{lemma}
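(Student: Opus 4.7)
The plan is to reduce the statement to an ambient-isotopy problem for slice disks in a stabilized $B^4$, then invoke the ``one is enough'' theorem of Auckly--Kim--Melvin--Ruberman--Schwartz \cite{akmrs:one-is-enough} applied to a pair of $2$-spheres obtained by doubling.

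First I would observe a functoriality: the stabilization summand $X$ (either $\sxs$ or $S^2 \tilde\times S^2$) in $B^4 \# X$ sits in a small ball disjoint from both slice disks, so the surgery construction $B^4_{1/k}(\cdot)$ is local and $B^4_{1/k}(D) \# X \cong (B^4 \# X)_{1/k}(D)$ rel $\partial$. Consequently, a smooth ambient isotopy of $D$ to $D'$ in $B^4 \# X$ rel $\partial$ produces the desired diffeomorphism $B^4_{1/k}(D) \# X \cong B^4_{1/k}(D') \# X$ rel $\partial$. It therefore suffices to produce such a disk isotopy after a single stabilization.

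To produce the disk isotopy, double everything and pass to the closed setting: form $S^4 = B^4 \cup_{S^3} (-B^4)$ together with the two embedded $2$-spheres $\Sigma := D \cup_K (-D')$ and $\Sigma_0 := D \cup_K (-D)$. Both are topologically unknotted in $S^4$ by Freedman's theorem (their complements have $\pi_1 = \Z$, generated by a meridian of $K$). To apply AKMRS, whose hypotheses require simply-connected complement and an ``ordinary'' homology class (not Poincar\'e dual to $w_2$), I would first tube both $\Sigma$ and $\Sigma_0$ with a $2$-sphere generator of $H_2(X)$ in $S^4 \# X$. A van Kampen argument using the dual sphere in $X$ shows that the tubed spheres have simply-connected complements; with $X$ chosen according to the parity of $k$ (which governs a characteristic-class computation involving the framing $-k$ of the meridional $2$-handle), the tubed spheres also satisfy the ordinary-class condition. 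AKMRS then yields a smooth ambient isotopy between the tubed spheres in $S^4 \# X$. Removing the tubes and arranging the isotopy to be supported in the $(-B^4) \# X$ hemisphere (which is possible since $\Sigma$ and $\Sigma_0$ coincide on the $D$ side) produces the required smooth isotopy from $D'$ to $D$ inside $B^4 \# X$ rel $\partial$.

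The main technical obstacles are: (i) arranging the tubing so that the complement becomes simply connected and the ordinary-class condition holds with the correct parity; (ii) performing the characteristic-class computation that links the parity of $k$ to the choice of $X$; and (iii) localizing the final smooth ambient isotopy to the $(-B^4)$-hemisphere, which I expect to follow from the fact that $\Sigma$ and $\Sigma_0$ already agree on the $D$-side, but which may require a careful handle-theoretic argument.
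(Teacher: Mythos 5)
Your first reduction (surgery is local, so an isotopy of $D$ to $D'$ rel $\partial$ in $B^4 \smallsum X$ would give the diffeomorphism of the surgered manifolds) is fine as a sufficient condition, but the statement you then set out to prove --- that \emph{any} two slice disks with common boundary become smoothly isotopic rel boundary in $B^4 \smallsum X$ after a single stabilization --- is strictly stronger than the lemma and is false in general: it is contradicted by the examples of exotic disks in $B^4$ that survive one external stabilization cited in the introduction \cite{guth,hayden:atomic}. Moreover, the parity of $k$ cannot genuinely enter your argument, because once you double into $S^4$ the $(-k)$-framed meridional $2$-handle is gone: in $S^4 \smallsum X$ the spheres $\Sigma = D \cup (-D')$ and $\Sigma_0 = D \cup (-D)$ are null-homologous, and after tubing their class is that of the generator of $H_2(X)$, whose characteristic-ness depends only on $X$, not on $k$. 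So if your argument worked it would prove the untwisted statement for odd $k$ as well; for $k=1$ and the disks of \cite{kang} that would (via your own first paragraph and gluing back the cobordism $C$) force $W \smallsum (\sxs) \cong W' \smallsum (\sxs)$, contradicting the main theorem of \cite{kang}. The concrete unjustified step is the ``localization'' at the end: an ambient isotopy of $\Sigma$ to $\Sigma_0$ in $S^4 \smallsum X$ cannot in general be arranged to fix the common $D$-half pointwise and be supported in the opposite hemisphere; isotopy of doubles does not yield isotopy rel boundary of the halves, and this is exactly where the exotic-disk phenomenon lives. (Separately, AKMRS gives an isotopy of the tubed spheres; ``removing the tubes'' to recover an isotopy of the original spheres is also not justified.)

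The paper's proof avoids disks entirely and keeps the $k$-dependence in the ambient manifold: cap $D$ and $D'$ with the core of the $0$-framed $2$-handle of the knot trace $X_0(K)$ to get spheres $S, S'$ in $Z = X_0(K) \cup (-k\text{-framed handle on } \mu)$, observe that surgery on $S$ (resp.\ $S'$) returns $B^4_{\sfrac{1}{k}}(D)$ (resp.\ $B^4_{\sfrac{1}{k}}(D')$), that the complements are simply connected, and that $[S]=[S']$ is characteristic in $Z$ precisely when $k$ is odd. Then \cite{akmrs:one-is-enough} applies directly in $Z$, giving an isotopy rel $\partial Z$ after one stabilization by $\sxs$ or $S^2 \tilde\times S^2$ according to that parity, and the isotopy induces the desired rel-boundary diffeomorphism of the surgered, once-stabilized manifolds. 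If you want to salvage your approach, you should work with these capped-off spheres in $Z$ rather than with the disks in a stabilized $B^4$.
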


\begin{proof}
Consider the 0-framed knot trace $X_0(K)$ and let  $S$ and $S'$ denote the 2-spheres obtained from $D$ and $D'$, respectively, by taking their union with the core disk in the 2-handle. Let $Z$ denote the 4-manifold obtained from $X_0(K)$ by attaching a $(-k)$-framed 2-handle along the meridian to $K$. Then $B^4_{\sfrac{1}{k}}(D)$ and $B^4_{\sfrac{1}{k}}(D')$ are obtained from $Z$ by surgering along the 2-spheres $S$ and $S'$, respectively. Observe that $S$ and $S'$ have simply-connected complements (because their meridians bound core disks of a 2-handle) and that these represent a characteristic homology class if and only if $k$ is odd.  By \cite{akmrs:one-is-enough}, $S$ and $S'$ are isotopic in $Z$ (rel $\partial Z$) after one stabilization with either $\sxs$ if $k$ is even or $S^2 \tilde \times S^2$ if $k$ is odd. This ambient isotopy (rel boundary) induces a diffeomorphism of the surgered 4-manifolds, namely the once-stabilized copies of $B^4_{\sfrac{1}{k}}(D)$ and $B^4_{\sfrac{1}{k}}(D')$. 
\end{proof}

For the narrower purposes of proving Theorem~\ref{main theorem}, we can get away with the weaker conclusion that $B^4_{\sfrac{1}{k}}(D)$ and $B^4_{\sfrac{1}{k}}(D')$ become diffeomorphic after one (possibly twisted) stabilization where, a priori, the diffeomorphism may be nontrivial along the boundary. (In explicit cases, as in the proof of Theorem~\ref{main theorem}, one may often promote this to the stronger rel boundary conclusion.) 

This weaker alternative admits an elementary, handle-theoretic proof: Begin by fixing a handle structure on $B^4$ relative to the slice disk $D$, so that $D$ appears as an unknotted, boundary-parallel disk in the 0-handle and $K$ appears as an unknot, with additional handles attached to the complement of $D$. This is depicted schematically in part (a) of Figure~\ref{fig:one-is-enough}. Part (b) of the figure depicts $B^4_{\sfrac{1}{k}}(D)$. In part (c), we perform a stabilization, which is twisted if and only if $k$ is odd. After a handle slide, we obtain the diagram in (d). By sliding the $(-k)$-framed 2-handle over the 0-framed meridian in the top right, we obtain the diagram in (e). Sliding all the original 2-handles off of the 1-handle yields the diagram in (f), and a 1-/2-handle cancellation then yields the diagram in (g). Observe that this final diagram is simply the union of $B^4$ with a 0-framed 2-handle attached along $K$ and a $k$-framed 2-handle attached along a meridian $\mu$ of $K$; this 4-manifold only depends  on $K$ and $k$, and not on $D$ or $D'$. 

\begin{figure}
\center
\def\svgwidth{\linewidth} 
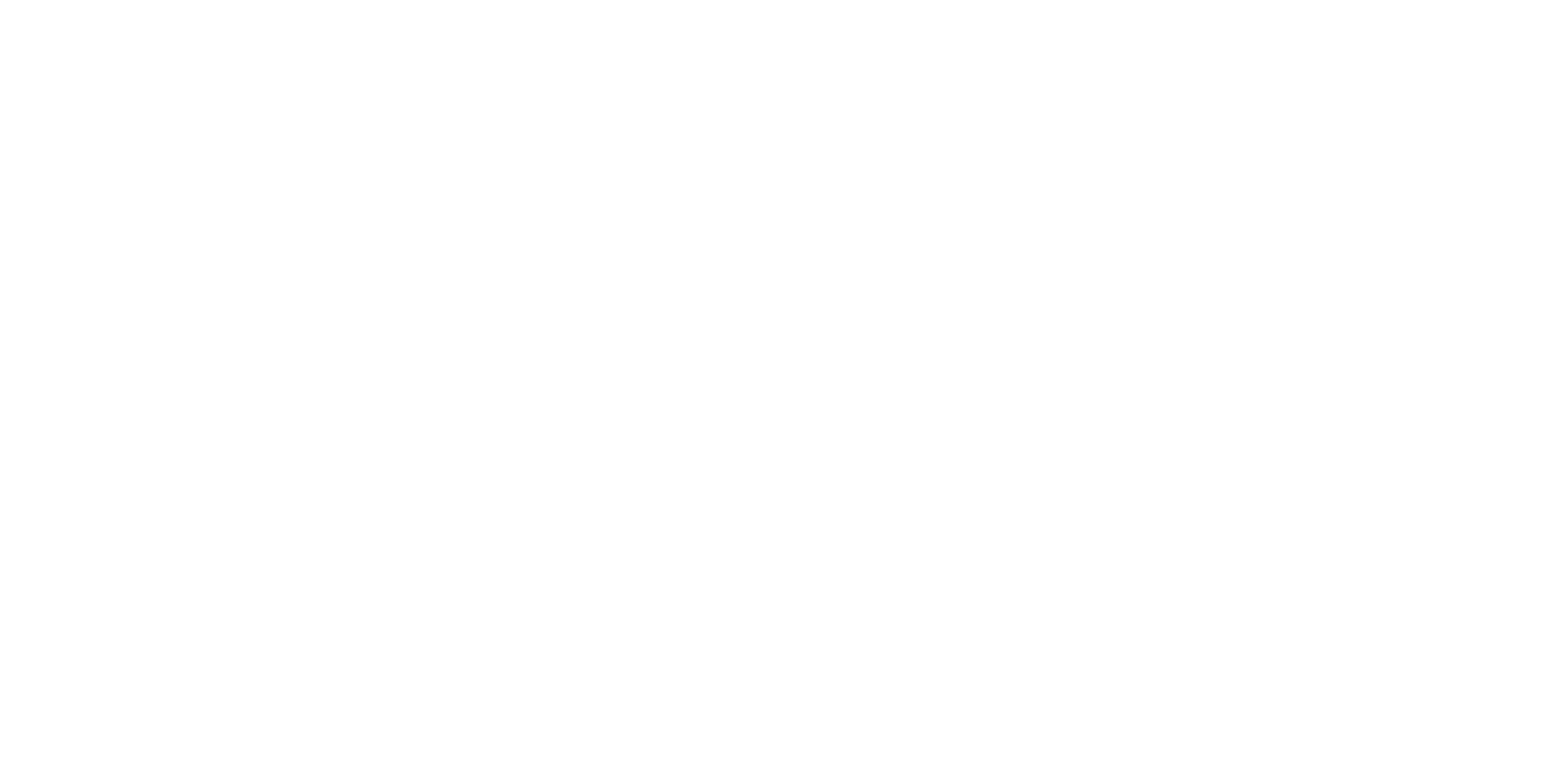

\medskip

\caption{Surgery along a slice disk after one (possibly twisted) stabilization.}\label{fig:one-is-enough}
\end{figure}

\begin{figure}
\center

\vspace{-.05in}

\def\svgwidth{.95\linewidth} 
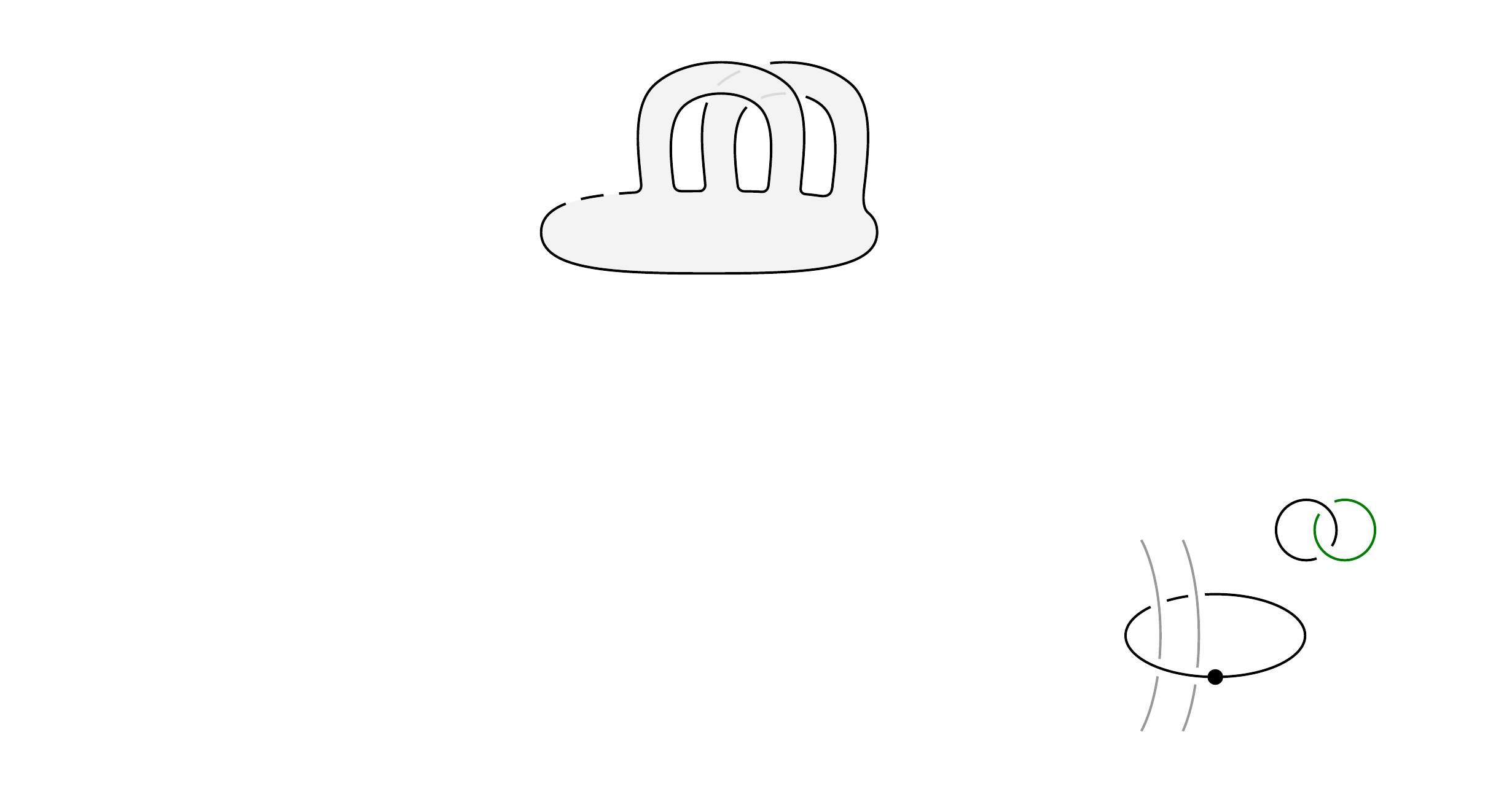

      \captionsetup{width=.875\linewidth}

\caption{Relating internal and external stabilization via logarithmic transform.}\label{fig:torus-surgery}
\end{figure}

\medskip

We also make note of a relationship between internal and external stabilization. The following is well-known to experts, but we sketch its proof for completeness.

\begin{lemma}\label{lem:internal}
Let $S$ be a smoothly embedded 2-sphere of square zero in a smooth 4-manifold $X$, and let $X_0$ denote the result of surgering $X$ along $S$. Then $X_0 \smallsum \sxs$ is obtained from $X$ by a logarithmic transformation of multiplicity zero along $S \smallsum T^2 \subset X$.
\end{lemma}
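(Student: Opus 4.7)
I would prove this by explicit Kirby calculus, following the sequence of pictures in Figure~\ref{fig:torus-surgery}. First, I set up a Kirby diagram for $X$ in which the square-zero sphere $S$ is presented as a 0-framed unknotted 2-handle together with an unknotted disk in its complement. The neighborhood $\nu(S) \cong S^2 \times D^2$ is then this single 2-handle attached to a ball, with the rest of $X$ glued along the boundary $S^2 \times S^1$. The surgery $X_0$ corresponds to replacing this 0-framed 2-handle by a dotted circle (i.e., converting it to a 1-handle), so $X_0 \smallsum \sxs$ is obtained by further adjoining a split 0-framed Hopf link to the diagram.

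On the other side, I realize the internal stabilization $T = S \smallsum T^2$ by a local move: attach a dotted 1-handle near $S$ and route a tube of $S$ through it, so that a neighborhood of $T$ appears as the standard Kirby diagram of $T^2 \times D^2$, namely two linked dotted circles together with a 0-framed 2-handle attached along a commutator of the two 1-handle generators, exactly as in Figure~\ref{fig:log-transform}(a). The log transform of multiplicity zero is then the local move of Figure~\ref{fig:log-transform}: exchange dots and 0-framings between the 0-framed commutator 2-handle (representing the meridional direction $\mu$ of $T$) and the dotted circle introduced by the internal stabilization.

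Finally, I would simplify the resulting diagram by handle slides and cancellations. Two effects occur in tandem: the original 0-framed 2-handle representing $S$ is converted into a dotted circle, which is exactly the handle-theoretic surgery that produces $X_0$, while the remaining 0-framed curve (the former 1-handle of the stabilization), together with a 0-framed meridian, splits off as a 0-framed Hopf link, contributing a $\sxs$ summand. Hence the log-transformed manifold is $X_0 \smallsum \sxs$, as desired. The main obstacle is the bookkeeping of framings and linking numbers through the log transform and the ensuing handle slides, which is precisely what Figure~\ref{fig:torus-surgery} is designed to make transparent.
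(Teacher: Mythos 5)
Your argument is essentially the paper's own proof: a Kirby-calculus verification following Figure~\ref{fig:torus-surgery}, presenting $S$ via its $0$-framed trace, adapting the diagram to $T = S \smallsum T^2$, performing the multiplicity-zero log transform as the local move of Figure~\ref{fig:log-transform}, and then sliding and cancelling to recognize $X_0 \smallsum \sxs$. The one bookkeeping point to fix is that the auxiliary dotted circle must be introduced as part of a canceling 1-/2-handle pair (as the paper does), so that the ambient manifold is still $X$ rather than $X$ with an extra 1-handle; that canceling $0$-framed 2-handle is also exactly the ``$0$-framed meridian'' you invoke at the end, which splits off with the former 1-handle as the Hopf link giving the $\sxs$ summand.
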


\begin{proof}
Fix a handle decomposition of $X$ relative to a neighborhood $N(S) \cong S^2 \times D^2$. This is depicted schematically in Figure~\ref{fig:torus-surgery}(a), where the grey arcs depict 2-handles that may intersect $S$. Since the diagram starts with $S^2 \times D^2$ (which is shown in Figure~\ref{fig:torus-surgery}(a) as the 0-trace of an unknotted component) which contains the given sphere $S$ as $S^2 \times \{0\}$, it is clear that while 1-handles and 3-handles can exist, we may assume that they are attached away from $S$.
As illustrated in parts (b) and (c) of the figure, we may locally stabilize $S$ to produce the torus $S \smallsum T^2$ and introduce canceling 1-/2-handle pairs to adapt the diagram to $S \smallsum T^2$. The diagram in (d) depicts the result of a multiplicity-zero log transform. After canceling the topmost 1-/2-handle pair and performing minor isotopy, we obtain (f), which is a diagram for $X_0 \smallsum (\sxs)$. (We also note that it does not matter which $S^1$-factor in $T^2$ was exchanged with the meridian to $S\smallsum T^2$; either transformation yields the same result.)
\end{proof}

\subsection{ Proof of the main theorem } Let $W$ and $W'$ denote the pair of compact, contractible 4-manifolds constructed by the second author in \cite{kang}. These 4-manifolds are homeomorphic but not diffeomorphic, and they remain nondiffeomorphic after a single stabilization   with $\sxs$. The construction begins with  a pair of  $(+1)$-surgeries along disks $D,D' \subset B^4$ with the same boundary $K$. While these 4-manifolds are only exotic rel boundary, results of Akbulut-Ruberman \cite{akbulut2016absolutely} can be used to extend $B^4_{+1}(D)$ and $B^4_{+1}(D')$ by an invertible homology cobordism $C$ from $S^3_{+1}(K)$ to another integer homology sphere $Y$ with the property that every self-diffeomorphism of $Y$ has an extension to $C$ that is the identity on $S^3_{+1}(K)$. The extended 4-manifolds $W=B^4_{+1}(D) \cup C$ and $W'=B^4_{+1}(D') \cup C$ are then \emph{absolutely exotic} and remain so after one stabilization with $\sxs$ \cite{kang}. 

By Lemma~\ref{lem:one-is-enough}, the 4-manifolds $B^4_{+1}(D)$ and $B^4_{+1}(D')$ become diffeomorphic rel boundary after one twisted stabilization with $S^2 \tilde \times S^2$. Since $W'$ can be obtained from  $W$ by cutting out $B^4_{+1}(D)$ and gluing in $B^4_{+1}(D')$ via the identity, we obtain the following:

\smallskip

\begin{proposition} \label{diffeomorphism}
$W \# (S^2 \tilde \times S^2)$ and $W' \#( S^2 \tilde \times S^2)$ are diffeomorphic rel boundary. \hfill $\square$
\end{proposition}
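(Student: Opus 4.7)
The plan is to reduce the statement to Lemma~\ref{lem:one-is-enough} by performing the connected sum in the interior of the slice-disk-surgery piece, and then extending by the identity across the Akbulut--Ruberman cobordism $C$.

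First, I would observe that since $W = B^4_{+1}(D) \cup C$ and $W' = B^4_{+1}(D') \cup C$, with the gluing of $C$ identical in both cases along $S^3_{+1}(K)$, we may choose the connected sum point for the stabilization to lie in the interior of the $B^4_{+1}(D)$ piece (respectively $B^4_{+1}(D')$ piece). This gives a canonical identification
\[
W \smallsum (S^2 \tilde \times S^2) \;=\; \bigl(B^4_{+1}(D) \smallsum (S^2 \tilde \times S^2)\bigr) \cup_{S^3_{+1}(K)} C,
\]
and likewise for $W'$, with the same gluing.

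Next, I would apply Lemma~\ref{lem:one-is-enough}(b) with $k=1$ (which is odd) to the slice disks $D, D' \subset B^4$ bounded by $K$. This gives a diffeomorphism
\[
\Phi \colon B^4_{+1}(D) \smallsum (S^2 \tilde \times S^2) \;\longrightarrow\; B^4_{+1}(D') \smallsum (S^2 \tilde \times S^2)
\]
that is the identity on the common boundary $S^3_{+1}(K)$.

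Finally, I would glue $\Phi$ to the identity on $C$ to produce a diffeomorphism $\widetilde{\Phi} \colon W \smallsum (S^2 \tilde \times S^2) \to W' \smallsum (S^2 \tilde \times S^2)$. This is well-defined because $\Phi$ restricts to the identity on $S^3_{+1}(K)$, where the two pieces are glued. By construction, $\widetilde{\Phi}$ is the identity on the outer boundary $Y$, so the two stabilized 4-manifolds are diffeomorphic rel boundary.

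There is no real obstacle here: the only substantive content is Lemma~\ref{lem:one-is-enough}, and the point of the argument is that because the lemma produces a \emph{rel boundary} diffeomorphism on the $B^4_{+1}$-pieces, it extends across $C$ trivially without having to appeal to the absolute exoticness machinery of \cite{akbulut2016absolutely} (which was only needed to promote the relative exoticness of $B^4_{+1}(D)$ versus $B^4_{+1}(D')$ to absolute exoticness of $W$ versus $W'$).
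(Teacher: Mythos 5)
Your proposal is correct and is essentially the paper's own argument: apply Lemma~\ref{lem:one-is-enough}(b) with $k=1$ to get a rel-boundary diffeomorphism of the stabilized $B^4_{+1}(D)$ and $B^4_{+1}(D')$ pieces, then extend by the identity across the cobordism $C$. Your closing remark that the Akbulut--Ruberman machinery plays no role here is also consistent with the paper's presentation.
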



We are now in position to prove the main theorem. 

\begin{proof}[Proof of Theorem~\ref{main theorem}]
Let the knot $K$, the slice disks $D$ and $D'$, and the 4-manifolds $W$ and $W'$ be as described above. We also let $Z$ denote the enlarged 4-manifold $Z=W \# (S^2 \tilde \times S^2) \cong W' \#( S^2 \tilde \times S^2)$. As in the proof of Lemma~\ref{lem:one-is-enough}, $Z$ can also be obtained from $B^4$ by attaching a 0-framed 2-handle along $K$ and a $(-1)$-framed 2-handle along a meridian of $K$. 

Consider the 2-spheres $S$ and $S'$ in $Z$ obtained by capping off the slice disks $D$ and $D'$ with the core of the 0-framed 2-handle attached along $K$. As in the proof of Lemma~\ref{lem:one-is-enough}, these spheres have simply-connected complements, hence are topologically isotopic by results of Perron \cite{perron2} and Quinn \cite{quinn:isotopy} (building on work of Freedman \cite{freedman}).   On the other hand, surgering $Z$ along $S$ or $S'$ yields $W$ or $W'$, respectively, so these spheres are not smoothly isotopic. Moreover, since $W$ and $W'$ remain nondiffeomorphic after one stabilization with $\sxs$, we see that $S$ and $S'$ remain smoothly nonisotopic after one external stabilization. 

If we instead consider internal stabilization, we obtain tori $T=S \smallsum T^2$ and $T'=S' \smallsum T^2$. Towards a contradiction, suppose there is a smooth isotopy carrying $T$ to $T'$. This carries a tubular neighborhood $N(T) \cong T^2 \times D^2$ to a tubular neighborhood $N(T')$. This diffeomorphism preserves the class of the meridian $\partial D^2$ in $\partial N(T)$ and $\partial N(T')$, so the 4-manifolds obtained from $Z$ by log transforms of multiplicity zero along either  $T$ or $T'$ are diffeomorphic. (Note that, as discussed in  \S\ref{sec:background}, the result of the 0-log transform is independent of the choice of framing for these neighborhoods.) However, Lemma~\ref{lem:internal} shows that the resulting 4-manifolds can also be obtained by surgering $Z$ along $S$ or $S'$ and then stabilizing with $\sxs$, yielding  $W \smallsum (\sxs)$ and $W'\smallsum (\sxs)$. This is a contradiction, as these latter 4-manifolds are not diffeomorphic. 
\end{proof}

{\small 
\bibliographystyle{alphamod}  
\bibliography{biblio}
}

\end{document}